\providecommand{\U}[1]{\protect\rule{.1in}{.1in}}
\newtheorem{theorem}{Theorem}
\newtheorem{corollary}[theorem]{Corollary}
\newtheorem{lemma}[theorem]{Lemma}
\newtheorem{proposition}[theorem]{Proposition}
\begin{document}

\title{A Bernstein result and counterexample for entire solutions to Donaldson's equation}
\author{Micah Warren\\University of Oregon }
\address{Fenton Hall\\
University of Oregon\\ Eugene, OR 97403}
\email{micahw@uoregon.edu}
\thanks{ The author's work is supported in part by the NSF via DMS-1161498}
\maketitle

\begin{abstract}
We show that convex entire solutions to Donaldson's equation are quadratic,
using a result of Weiyong He. \ We also exhibit entire solutions to the
Donaldson equation that are not of the form discussed by He. \ In the process
we discover some non-trivial entire solutions to complex Monge-Amp\`{e}re equations.

\end{abstract}

\section{Introduction}

In this note we show the following. \ 

\begin{theorem}
Suppose that $u$ is a convex solution to the Donaldson equation on $%
%TCIMACRO{\U{211d} }%
%BeginExpansion
\mathbb{R}
%EndExpansion
\times%
%TCIMACRO{\U{211d} }%
%BeginExpansion
\mathbb{R}
%EndExpansion
^{n-1}=(t,x_{2},...,x_{n})$
\begin{equation}
\tilde{\sigma}_{2}(D^{2}u)=u_{11}(u_{22}+u_{33}+...+u_{nn})-u_{12}%
^{2}-...-u_{1n}^{2}=1.\label{edon}%
\end{equation}
Then $u$ is a quadratic function. \ 
\end{theorem}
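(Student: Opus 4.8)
The plan is to use a partial Legendre transform in the distinguished variable $t=x_{1}$ to turn \eqref{edon} into a \emph{linear} equation, and then to invoke a Liouville-type statement, which is where the result of He enters. Throughout I take $u$ smooth; for a convex solution this follows from standard regularity theory. The first thing to record is that $u_{11}>0$ everywhere: since $u$ is convex, $u_{11}\geq0$, and if $u_{11}(p)=0$ then for each $j\geq2$ the positive semidefinite $2\times2$ minor with entries $u_{11},u_{1j},u_{1j},u_{jj}$ has a vanishing diagonal entry at $p$, forcing $u_{1j}(p)=0$; but then the left side of \eqref{edon} is $0$ at $p$, a contradiction.

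\emph{The partial Legendre transform.} Since $t\mapsto u(t,x)$ is strictly convex, I introduce $v(s,x)=st-u(t,x)$ with $s=u_{t}(t,x)$. The standard computation gives
\[
v_{ss}=\tfrac1{u_{11}}>0,\qquad v_{sx_{j}}=-\tfrac{u_{1j}}{u_{11}},\qquad v_{x_{i}x_{j}}=-\Big(u_{ij}-\tfrac{u_{1i}u_{1j}}{u_{11}}\Big)\quad(i,j\geq2),
\]
so the block $(v_{x_{i}x_{j}})_{i,j\geq2}$ is minus the Schur complement of $u_{11}$ in $D^{2}u$, hence negative semidefinite because $D^{2}u\geq0$. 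Dividing \eqref{edon} by $u_{11}$ rewrites its left side as the trace of that Schur complement, so the equation becomes
\[
v_{ss}+v_{x_{2}x_{2}}+\cdots+v_{x_{n}x_{n}}=0,
\]
i.e. $v$ is harmonic, convex in $s$, and concave in $(x_{2},\dots,x_{n})$. Moreover $u$ is recovered from $v$ by the inverse partial Legendre transform, and a quadratic $v$ with $v_{ss}$ a positive constant yields a quadratic $u$.

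\emph{The Liouville step.} The content I would take from He is that an \emph{entire} harmonic function on $\mathbb{R}^{n}$ that is convex in one variable and concave in the complementary hyperplane is quadratic. A self-contained argument: $v_{ss}$ is harmonic and $\geq0$, hence constant by the Liouville theorem for nonnegative harmonic functions; and for every constant vector $\xi$ in the $x$-directions, $\partial_{\xi}^{2}v$ is harmonic and $\leq0$, hence constant, so all $v_{x_{i}x_{j}}$ with $i,j\geq2$ are constant. Then every third partial derivative of $v$ vanishes — a triple index either has two $1$'s, giving a derivative of the constant $v_{ss}$, or has two indices $\geq2$, giving a derivative of a constant $v_{x_{i}x_{j}}$ — so $v$ is a quadratic polynomial, and transforming back shows $u$ is quadratic.

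\emph{The main obstacle.} The gap between the second and third steps is that the image $\Omega$ of the partial Legendre transform — the set of $(s,x)$ with $s$ in the range of $u_{t}(\cdot,x)$ — need not a priori be all of $\mathbb{R}^{n}$, and the Liouville statement genuinely fails on proper pieces such as slabs or half-spaces (for instance $s^{3}-3sx_{2}^{2}$ on $\{s>0\}$ is harmonic, convex in $s$, concave in $x_{2}$). Closing this is the heart of the matter, and it is exactly here that convexity and the equation are used together: if $u_{t}(\cdot,x_{0})$ were bounded above (the bounded-below case is symmetric), then $u_{11}(t,x_{0})\to0$ along a sequence $t\to\infty$, so \eqref{edon} forces $u_{22}+\cdots+u_{nn}\to\infty$ along that line; one then plays this against convexity — for instance via the fact that a concave function on $\mathbb{R}^{n-1}$ bounded below must be constant — to reach a contradiction and conclude $\Omega=\mathbb{R}^{n}$. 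I expect this globality step to be the principal technical obstacle; once it is in place, the Liouville step applies to the entire harmonic function $v$ and the theorem follows.
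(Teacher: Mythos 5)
Your reduction is sound and in fact runs parallel to the paper's: the partial Legendre transform $v(s,x)=st-u(t,x)$, $s=u_t$, is exactly the device used (via He) in the paper, where $\theta(z,x)=t(z,x)$ is your $v_s$, and the Liouville input "a nonnegative entire harmonic function is constant" applied to $v_{ss}=1/u_{11}$ is literally the step "$\partial\theta/\partial z=1/u_{11}$ is a positive harmonic function, so must be constant." Your observation that $u_{11}>0$ everywhere, the computation turning \eqref{edon} into $v_{ss}+v_{x_2x_2}+\cdots+v_{x_nx_n}=0$, and the polarization argument showing an entire harmonic function convex in $s$ and concave in $x$ is quadratic are all correct. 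Your counterexample $s^{3}-3sx_{2}^{2}$ on a half-space correctly shows why none of this is worth anything until the domain of $v$ is shown to be all of $\mathbb{R}^{n}$.

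But that globality step is precisely where your argument stops, and what you offer in its place does not close it. You observe that if $u_t(\cdot,x_0)$ is bounded above then $u_{11}\to0$ along a sequence and hence $u_{22}+\cdots+u_{nn}\to\infty$ there; this is true but is not by itself contradictory — a convex function can have arbitrarily large Laplacian along a line — and the suggestion to "play this against convexity" via "a concave function bounded below is constant" does not identify which concave function, bounded below by what, or why its constancy would be absurd. The paper closes the gap by an entirely different mechanism: a comparison-function lemma showing that any ellipsoid $E=\{|Mx+\vec b|<1\}$ contained in a sublevel set $\{u\leq h\}$ (with $u\geq0$, $u$ vanishing somewhere in $E$) must satisfy $\tilde\sigma_2(M^{2})\geq\tfrac{1}{4h^{2}}$, proved by comparing $u$ with the supersolution $h|Mx+\vec b|^{2}$ and invoking the maximum principle. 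If $u_1\leq A$ on the positive $t$-axis, the sublevel set $\{u\leq h\}$ contains the point $(h/A,0,\dots,0)$ and hence an ellipsoid elongated like $h/A$ in the $t$-direction; the lemma then forces the sublevel set to meet some $x_i$-axis within a fixed distance $R=2\sqrt{n-1}c_{n}A$ \emph{independent of} $h$, which is impossible once $h$ exceeds $\sup_{B_R}u$. This quantitative barrier argument is the actual content of the proof, and your proposal, as written, is missing it.
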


Donaldson introduced the operator%
\[
Q(D^{2}u)=u_{tt}\Delta u-|\nabla u_{t}|^{2}%
\]
arising in the study of the geometry of the space of volume forms on compact
Riemannian manifolds \cite{Don}. \  On Euclidean space, (\ref{edon})  becomes
an interesting non-symmetric fully nonlinear equation. \ Weiyong He has
studied aspects of entire solutions on Euclidean space, and was able to show
that \ \cite[Theorem 2.1]{HePJM} if $u_{11}=const,$ then the solution can be
written in terms of solutions to Laplace equations. \ \ 

Here we show that any convex solution must also satisfy $u_{11}=const.$ \ It
follows quickly that the solution must be quadratic. \ We also show that, in
the absence of the convexity constraint, solutions exists for which
$u_{11}=const$ fails.

\begin{theorem}
There exists solutions to the Donaldson equation which are not of the form
given by He. \ \ \ 
\end{theorem}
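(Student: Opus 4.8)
The plan is to exhibit explicit entire solutions of (\ref{edon}) for which $u_{11}$ is non-constant. Since the solutions produced by He all satisfy $u_{11}=\mathrm{const}$ — that being the hypothesis of \cite[Theorem 2.1]{HePJM} — any such example is, by definition, not of the form given by He, which is all the theorem asks. To locate a good ansatz I would first pass through a complex Monge--Amp\`ere picture. Take $n=3$, write $z=x_{2}+ix_{3}\in\mathbb{C}$, and complexify the $t$-axis to $\tau\in\mathbb{C}^{\ast}$; for a function invariant under the rotation $\tau\mapsto e^{i\theta}\tau$, say $\Phi(\tau,z,\bar z)=u(\log|\tau|,z,\bar z)$, a short computation of the complex Hessian gives
\[
\det\!\big(\Phi_{i\bar j}\big)=\frac{1}{4|\tau|^{2}}\big(u_{tt}u_{z\bar z}-|u_{tz}|^{2}\big)=\frac{1}{16|\tau|^{2}}\,\tilde{\sigma}_{2}(D^{2}u),
\]
so that (\ref{edon}) is equivalent to a complex Monge--Amp\`ere equation for $\Phi$. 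This makes it natural to search for solutions of the separated form $u(t,x_{2},x_{3})=a(t)(x_{2}^{2}+x_{3}^{2})+b(t)$.

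Substituting this ansatz into (\ref{edon}) and using $\Delta_{x}(x_{2}^{2}+x_{3}^{2})=4$ produces $4\big(aa''-(a')^{2}\big)(x_{2}^{2}+x_{3}^{2})+4ab''=1$, which holds identically iff $aa''=(a')^{2}$ and $ab''=\tfrac14$. The first ODE forces $\ln|a|$ to be affine, i.e.\ $a(t)=ce^{\lambda t}$ with $a$ nowhere zero; then $b''=\tfrac{1}{4c}e^{-\lambda t}$ integrates to $b(t)=\tfrac{1}{4c\lambda^{2}}e^{-\lambda t}$ up to an affine term. Thus, for every $c\neq0$ and $\lambda\neq0$,
\[
u(t,x_{2},x_{3})=c\,e^{\lambda t}(x_{2}^{2}+x_{3}^{2})+\frac{1}{4c\lambda^{2}}e^{-\lambda t}
\]
is a smooth entire solution of (\ref{edon}); the verification is immediate, the $(x_{2}^{2}+x_{3}^{2})$-terms cancelling and the remainder being exactly $1$. (The degenerate case $\lambda=0$ just recovers the quadratic solutions.) For $n>3$ the same formula still solves the equation after adding any trace-free quadratic form in the variables $x_{4},\dots,x_{n}$.

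For this family $u_{11}=u_{tt}=c\lambda^{2}e^{\lambda t}(x_{2}^{2}+x_{3}^{2})+\tfrac{1}{4c}e^{-\lambda t}$ is visibly non-constant, indeed unbounded, whenever $\lambda\neq0$, so $u$ is not of the form given by He, establishing the theorem. Feeding these $u$ back through the correspondence above yields the explicit, non-quadratic complex Monge--Amp\`ere solutions referred to in the abstract, which I would then record on their own. The only genuine content here is finding the ansatz: a naive product $u=f(t)+g(x)$ reduces (\ref{edon}) to $f''\Delta g=1$ and hence always forces $u_{11}=\mathrm{const}$, and it is precisely the twisted, exponential-in-$t$ profile — dictated by the rotational symmetry of the complexified problem — that defeats this; after that the argument is a one-line ODE computation plus a direct check.
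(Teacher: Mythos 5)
Your proposal is correct and essentially reproduces the paper's construction: the separated ansatz $a(t)(x_2^2+x_3^2)+b(t)$ yields exactly the paper's example $u=r^2e^t+\tfrac14 e^{-t}$ as the member $c=\lambda=1$ of your family, verified by the same direct computation of $\tilde{\sigma}_2$, and the observation that non-constant $u_{11}=u_{tt}$ rules out He's form is the (implicit) criterion the paper also relies on. The only differences are cosmetic: you derive the exponential profile from the ODE $aa''=(a')^2$ rather than positing it, and your motivational complex Monge--Amp\`ere reduction uses $\tau=e^{t+is}$ where the paper later complexifies directly via $z_1=t+is$.
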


In real dimension $3$ we note that solutions of (\ref{edon}) can be extended
to solutions of the  the  complex Monge-Amp\`{e}re equation on $%
%TCIMACRO{\U{2102} }%
%BeginExpansion
\mathbb{C}
%EndExpansion
^{2}$
\begin{equation}
\det\left(  \partial\bar{\partial}u\right)  =1\label{cma}%
\end{equation}
and we can conclude the following. \ 

\begin{corollary}
There exist a nonflat solution of the complex Monge-Amp\`{e}re equation
(\ref{cma}) on $%
%TCIMACRO{\U{2102} }%
%BeginExpansion
\mathbb{C}
%EndExpansion
^{2}$ whose potential depends on only three real variables.
\end{corollary}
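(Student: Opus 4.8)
The plan is to exhibit the desired solution by taking a three–dimensional solution of Donaldson's equation and adjoining a single dummy real variable so that it becomes a K\"ahler potential on $\mathbb{C}^{2}$. Concretely, let $u$ be a solution of \eqref{edon} on $\mathbb{R}\times\mathbb{R}^{2}=(t,x_{2},x_{3})$, so that
\[
u_{tt}(u_{22}+u_{33})-u_{2t}^{2}-u_{3t}^{2}=1 .
\]
I would put coordinates $(z,w)$ on $\mathbb{C}^{2}$ by $z=x_{2}+ix_{3}$ and $w=t+i\tau$, and set
\[
v(z,w)=4\,u(t,x_{2},x_{3}),
\]
regarded as a function on $\mathbb{C}^{2}$ that does not depend on $\tau=\operatorname{Im}w$. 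By construction $v$ is a function of only the three real variables $\operatorname{Re}w=t$, $\operatorname{Re}z=x_{2}$, $\operatorname{Im}z=x_{3}$, so the dimension assertion is automatic.

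The next step is to compute the complex Hessian of $v$. Using $\partial_{z}\partial_{\bar z}=\tfrac14(\partial_{x_{2}}^{2}+\partial_{x_{3}}^{2})$, $\partial_{w}\partial_{\bar w}=\tfrac14(\partial_{t}^{2}+\partial_{\tau}^{2})$, $\partial_{z}\partial_{\bar w}=\tfrac14(\partial_{x_{2}}-i\partial_{x_{3}})(\partial_{t}+i\partial_{\tau})$, together with $\partial_{\tau}v=0$, a direct calculation gives
\[
\partial\bar\partial v=\begin{pmatrix}u_{22}+u_{33}&u_{2t}-iu_{3t}\\u_{2t}+iu_{3t}&u_{tt}\end{pmatrix},
\]
the factor $4$ having been inserted precisely to absorb the factors of $\tfrac14$. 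Taking determinants,
\[
\det\!\left(\partial\bar\partial v\right)=u_{tt}(u_{22}+u_{33})-u_{2t}^{2}-u_{3t}^{2}=\tilde\sigma_{2}(D^{2}u)=1,
\]
so $v$ is an entire solution of \eqref{cma} on $\mathbb{C}^{2}$.

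It remains to arrange that $v$ is nonflat. Here I would invoke Theorem 2: take $u$ to be an entire solution of \eqref{edon}, posed on $\mathbb{R}\times\mathbb{R}^{2}$, that is not of the form described by He. Since every quadratic polynomial has $u_{11}$ constant and is therefore of He's form, such a $u$ is not a polynomial of degree at most two; consequently $v=4u$ is not quadratic, $\partial\bar\partial v$ is nonconstant, and $v$ is a nonflat solution of \eqref{cma}. The computations above are routine, so the only point that needs care is the input: one must check that the construction behind Theorem 2 can be carried out in real dimension three (equivalently on $\mathbb{R}\times\mathbb{R}^{2}$), so that the dummy variable $\tau$ is the unique variable on which $v$ fails to depend and the count ``three real variables'' is sharp; I expect this to be the only genuine obstacle.
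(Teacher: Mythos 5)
Your reduction is exactly the paper's: the paper takes the explicit $n=3$ counterexample $u=(x^{2}+y^{2})e^{t}+\tfrac14 e^{-t}$ constructed for Theorem 2, adjoins the dummy variable $s=\operatorname{Im}z_{1}$ with $z_{1}=t+is$, and observes that for $s$-independent potentials $\det(\partial\bar\partial v)$ is, up to normalization, $\tilde{\sigma}_{2}(D^{2}u)$; your bookkeeping with the factor $4$ is if anything more careful than the paper's. The genuine problem is the final step, the chain ``$u$ not of He's form $\Rightarrow v$ not quadratic $\Rightarrow\partial\bar\partial v$ nonconstant $\Rightarrow$ the metric is nonflat.'' Both of the last two implications fail. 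For the first: $v=|z|^{2}+|w|^{2}+\operatorname{Re}(w^{3})$ is not quadratic, yet its complex Hessian is the constant identity matrix, because $\operatorname{Re}(w^{3})$ is pluriharmonic. For the second, which is the one that really matters: flatness is a curvature condition, invariant under biholomorphic changes of coordinates, and cannot be read off from nonconstancy of the coefficients $g_{i\bar j}$. For instance $v=|z+w^{2}|^{2}+|w|^{2}$ satisfies $\det(\partial\bar\partial v)=1$, is not quadratic, and has nonconstant complex Hessian, but the associated metric is the pullback of the Euclidean metric under the biholomorphism $(z,w)\mapsto(z+w^{2},w)$ and hence flat. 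So non-quadraticity of the potential, by itself, tells you nothing about flatness of the metric.

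To close the gap you must actually verify that the curvature of $g_{i\bar j}=\partial_{z_{i}}\partial_{\bar z_{j}}v$ does not vanish for some specific solution; an abstract ``solution not of He's form'' supplied by Theorem 2 gives you no handle on the curvature tensor. This is why the paper works with the concrete potential $u=(x^{2}+y^{2})e^{t}+\tfrac14 e^{-t}$, for which the metric coefficients are explicit and the curvature can be computed in closed form and seen to be nonzero (the paper states this as ``one can check''). Your worry about realizing Theorem 2 in real dimension three is not an issue --- the paper's counterexample is built precisely on $\mathbb{R}\times\mathbb{R}^{2}$ --- but you should replace the abstract nonflatness argument by the explicit example together with an actual curvature computation.
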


\section{Proof of Theorem 1}

\begin{lemma}
Suppose that $K_{h}$ is the sublevel set $u\leq h$ of a nonnegative solution
to
\[
\tilde{\sigma}_{2}(D^{2}u)=u_{11}(u_{22}+u_{33}+...+u_{nn})-u_{12}%
^{2}-...-u_{1n}^{2}=1.
\]
Then for all ellipsoids $E\subset K_{h}$ such that if $A:E\rightarrow B_{1}$
is affine diffeomorphism with
\[
A=Mx+\vec{b},
\]
we have
\[
\tilde{\sigma}_{2}(M^{2})\geq\frac{1}{4}\frac{1}{h^{2}}.
\]

\end{lemma}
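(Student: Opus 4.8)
The plan is to compare $u$ on $E$ with an explicit quadratic barrier built from the normalizing map $A$, exploiting that the quadratic solutions of $\tilde{\sigma}_{2}=1$ are exactly the extremal case of this inequality. We may assume $h>0$ (otherwise $K_{h}$ contains no ellipsoid and the statement is vacuous) and, replacing $A$ by the canonical normalization of the ellipsoid, that $M$ is symmetric and positive definite (the shape matrix of an ellipsoid is symmetric positive definite); write $x_{0}=-M^{-1}\vec{b}$ for the center of $E$, so that $A(x_{0})=0$.

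The elementary algebra I would record first: $\tilde{\sigma}_{2}$ is homogeneous of degree $2$, so $\tilde{\sigma}_{2}(sN)=s^{2}\tilde{\sigma}_{2}(N)$; on positive definite matrices $\tilde{\sigma}_{2}$ is a sum of $2\times2$ principal minors, hence strictly positive; and at any matrix $P$ with $P_{11}>0$ and $\tilde{\sigma}_{2}(P)>0$ the linearization coefficient matrix $\tilde{\sigma}_{2}^{ij}$ is positive definite, with $\tilde{\sigma}_{2}^{11}=P_{22}+\dots+P_{nn}>0$ (a one-line Schur complement computation). In particular $\tilde{\sigma}_{2}(M^{2})>0$, so we may set $a:=\tilde{\sigma}_{2}(M^{2})^{-1/2}$ and define
$$\phi(x)=\frac{a}{2}\bigl(|Mx+\vec{b}|^{2}-1\bigr)+h .$$
Since $M$ is symmetric, $D^{2}\phi=aM^{2}$, hence $\tilde{\sigma}_{2}(D^{2}\phi)=a^{2}\tilde{\sigma}_{2}(M^{2})=1=\tilde{\sigma}_{2}(D^{2}u)$, and $\phi$ is convex. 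On $\partial E$ one has $|Mx+\vec{b}|=1$, so $\phi\equiv h$ there, while $u\le h$ on $\overline{E}\subset K_{h}$; thus $\phi\ge u$ on $\partial E$.

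The heart of the matter is the comparison $\phi\ge u$ on all of $E$. Put $w=u-\phi$. Applying the fundamental theorem of calculus to $t\mapsto\tilde{\sigma}_{2}\bigl((1-t)D^{2}\phi+tD^{2}u\bigr)$ gives $0=\tilde{\sigma}_{2}(D^{2}u)-\tilde{\sigma}_{2}(D^{2}\phi)=a^{ij}(x)\,D_{ij}w$, where $a^{ij}(x)=\int_{0}^{1}\tilde{\sigma}_{2}^{ij}\bigl((1-t)D^{2}\phi(x)+tD^{2}u(x)\bigr)\,dt$. Along this segment the interpolating matrix is positive definite (for $t<1$ because $D^{2}\phi$ is positive definite and $D^{2}u$ positive semidefinite by convexity; at $t=1$ because a convex solution has $u_{11}>0$ and $\tilde{\sigma}_{2}(D^{2}u)=1>0$), so by the algebra above $\bigl(a^{ij}(x)\bigr)$ is positive definite; being continuous on the compact set $\overline{E}$ it is uniformly elliptic there, and the classical weak maximum principle for $a^{ij}D_{ij}w=0$ yields $\max_{\overline{E}}w=\max_{\partial E}w\le0$, i.e.\ $\phi\ge u$ on $E$. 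This is the step I expect to be the main obstacle: $\tilde{\sigma}_{2}$ is not elliptic in general, so the argument genuinely needs $u$ to stay inside the elliptic cone of the operator — which is precisely why convexity, together with the normalization to $u\ge0$, is the natural hypothesis — and one must also know $u$ is regular enough for this to be a classical comparison, which follows from interior elliptic regularity once one is in the elliptic regime.

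Finally, evaluating at the center $x_{0}$, where $Mx_{0}+\vec{b}=0$,
$$0\le u(x_{0})\le\phi(x_{0})=\frac{a}{2}(0-1)+h=h-\frac{a}{2},$$
so $a\le 2h$ and therefore $\tilde{\sigma}_{2}(M^{2})=a^{-2}\ge\dfrac{1}{4h^{2}}$. It is worth noting that every inequality above is an equality when $u(x)=\tfrac12 x^{T}Px$ with $\tilde{\sigma}_{2}(P)=1$ and $E=K_{h}$ — there $M=(2h)^{-1/2}P^{1/2}$ and $\phi=u$ — which both explains the constant $\tfrac14 h^{-2}$ and shows the estimate is sharp.
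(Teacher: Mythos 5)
Your proof is correct and is essentially the paper's argument: both compare $u$ on $E$ with the quadratic barrier $\sim|Mx+\vec b|^2$ adapted to the ellipsoid, apply the comparison/maximum principle, and evaluate at the center where the barrier is small but $u\ge 0$ (the paper runs it as a contradiction with a strict supersolution $v=h|A(x)|^2$, you run it directly with an exact-solution barrier). Your version is somewhat more careful than the paper's, since you justify the ellipticity of the linearized operator along the segment between the two Hessians and fix the symmetric normalization of $M$, both of which the paper leaves implicit.
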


\begin{proof}
Consider the function $v$ on $%
%TCIMACRO{\U{211d} }%
%BeginExpansion
\mathbb{R}
%EndExpansion
^{n}$ defined by
\[
v(x)=h|A(x)|^{2}.
\]
On the boundary of $E$, we have
\[
v(x)=h\geq u.
\]
We have
\begin{align*}
Dv &  =2hM\left(  Mx+\vec{b}\right)  \\
D^{2}v &  =2hM^{2}.
\end{align*}
Thus
\[
\tilde{\sigma}_{2}(D^{2}v)=4h^{2}\tilde{\sigma}_{2}(M^{2}).
\]
Now suppose that
\[
\tilde{\sigma}_{2}(M^{2})<\frac{1}{4h^{2}}.
\]
Then
\[
\tilde{\sigma}_{2}(D^{2}v)<1,
\]
so $v$ is a supersolution to the equation, and must lie strictly above the
solution $u.$ \ But $v$ must vanish at $A^{-1}(0).$ \ Because $u$ is
nonnegative, this is a contradiction of the strong maximum principle. \ 
\end{proof}

\bigskip

\begin{proposition}
Suppose that $u$ is an entire convex solution to%
\[
\tilde{\sigma}_{2}(D^{2}u)=u_{11}(u_{22}+u_{33}+...+u_{nn})-u_{12}%
^{2}-...-u_{1n}^{2}=1
\]
Then
\[
\lim_{t\rightarrow\infty}u_{1}(t,0,..0)=\infty.
\]

\end{proposition}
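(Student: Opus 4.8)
The plan is to argue by contradiction, pairing two length scales of the sublevel sets $K_h=\{u\le h\}$: if $u_1(t,0,\dots,0)$ stays bounded then $K_h$ can only grow linearly in the $x_1$-direction, while it always contains a ball about the origin whose radius tends to $\infty$, and the Lemma above rules out a convex set inside $K_h$ that is simultaneously that elongated and that thick.

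First I would normalize so that $u\ge 0$ (the only point requiring comment --- one either invokes boundedness below of a convex entire solution, or observes that the Lemma really only uses the sign of $u$ at the center of the comparison ellipsoid). Convexity gives $u_{11}\ge 0$, so $t\mapsto u_1(t,0,\dots,0)$ is non-decreasing; if it does not tend to $+\infty$ it converges to a finite limit $L$, and then $u_1(t,0,\dots,0)\le L$ for all $t\ge 0$, hence $u(t,0,\dots,0)\le \gamma+L^{+}t$ for $t\ge 0$, with $\gamma=u(0,\dots,0)$ and $L^{+}=\max(L,0)$. If $L^{+}=0$ the full ray along the $x_1$-axis lies in one sublevel set, and an elementary convexity argument then places arbitrarily long cylinders of a fixed transverse radius inside that set, which the Lemma forbids; so I may assume $L=L^{+}>0$. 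Then for $h>\gamma$ the segment $S_h=\{(t,0,\dots,0):0\le t\le c_h\}$ with $c_h=(h-\gamma)/L$ lies in $K_h$, and, decisively, $c_h$ grows \emph{linearly} in $h$.

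The second ingredient is that the $K_h$ increase to all of $\mathbb{R}^{n}$, so the inradius $r_h:=\operatorname{dist}(0,\partial K_h)\to\infty$. Applying the Lemma to the ball $E=B_{r_h}(0)$, where $M=r_h^{-1}I$ and $\tilde{\sigma}_2(M^2)=(n-1)r_h^{-4}$, already forces $r_h\le (4(n-1))^{1/4}\sqrt{h}$. If $r_h\ge c_h$ this gives $c_h\le (4(n-1))^{1/4}\sqrt{h}$, contradicting the linear growth of $c_h$ for $h$ large. If instead $r_h<c_h$, then by convexity $K_h$ contains the convex hull of $B_{r_h}(0)$ and $(c_h,0,\dots,0)$; a short computation shows this hull contains the cylinder of length $c_h/2$ along the $x_1$-axis and transverse radius $r_h/2$, hence the ellipsoid centered at $(c_h/4,0,\dots,0)$ with $x_1$-semiaxis $c_h/4$ and transverse semiaxes $r_h/2$. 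Feeding that ellipsoid into the Lemma yields $c_h r_h\le C(n)h$, so $r_h\le C(n)Lh/(h-\gamma)$ remains bounded, contradicting $r_h\to\infty$. Either way the assumption fails.

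The convexity estimates (monotonicity of $u_1$ along the axis, and the explicit cylinder sitting inside the convex hull) and the bookkeeping of constants through the Lemma are routine. The step I expect to carry the real weight is the scale-matching in the last two paragraphs: recognizing that the negation of the statement is precisely what pins the longitudinal extent $c_h$ of $K_h$ to $O(h)$, so that the Lemma --- which in effect bounds (longitudinal extent)$\times$(transverse girth) by $O(h)$ --- can then force the girth $r_h$ about the origin to stay bounded, in conflict with the trivial fact that $r_h\to\infty$.
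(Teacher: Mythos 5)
Your argument is correct and follows essentially the same route as the paper: the same barrier Lemma is applied to an elongated ellipsoid inscribed in $K_h$, whose longitudinal semi-axis grows linearly in $h$ precisely because $u_1$ is assumed bounded, forcing the transverse size of $K_h$ near the origin to stay bounded, which contradicts the trivial fact that the sublevel sets exhaust $\mathbb{R}^n$. The only cosmetic differences are that you measure the transverse size by the inradius $r_h$ rather than by the axis intercepts $a_i(h)$, and that the case analysis ($L\le 0$ versus $L>0$) is absorbed in the paper by normalizing $Du(0)=0$ (which, incidentally, is the clean way to justify your $u\ge 0$ reduction, since a convex entire function need not be bounded below before subtracting its supporting plane).
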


\begin{proof}
Assume not. \ Instead assume that $u_{1}\leq A.$ \ \ Assume that $u(0)=0$ and
$Du(0)=0,$ adjusting $A$ if necessary. \ Then%
\[
u(t,0,...,0)=\int_{0}^{t}u_{1}(s)ds\leq\int_{0}^{t}Ads\leq At.
\]
Now consider the convex sublevel set $u\leq h.$ \ This must contain the point
\[
(\frac{h}{A},0,..,0).
\]
The level set $u=h$ intersect the other axes at
\begin{align*}
&  (0,a_{2}(h),0,...,)\\
&  (0,0,a_{3}(h),...,0)\\
&  \text{etc.}%
\end{align*}
This level set is convex. It must contain the simplex with the above points as
vertices, and this simplex must contain an ellipsoid $E$ which has an affine
transformation to the unit ball of the following form
\begin{align*}
A &  =Mx+\vec{b}\\
M &  =c_{n}\left(
\begin{array}
[c]{cccc}%
\frac{A}{h} &  &  & \\
& \frac{1}{a_{2}} &  & \\
&  & \frac{1}{a_{3}} & \\
&  &  & ...
\end{array}
\right)  .
\end{align*}
Thus
\[
M^{2}=c_{n}^{2}\left(
\begin{array}
[c]{cccc}%
\left(  \frac{A}{h}\right)  ^{2} &  &  & \\
& \left(  \frac{1}{a_{2}}\right)  ^{2} &  & \\
&  & \left(  \frac{1}{a_{3}^{2}}\right)  ^{2} & \\
&  &  & ...
\end{array}
\right)
\]
and
\[
\tilde{\sigma}_{2}(M^{2})=c_{n}^{2}\left(  \frac{A}{h}\right)  ^{2}\left(
\left(  \frac{1}{a_{2}}\right)  ^{2}+...+\left(  \frac{1}{a_{n}}\right)
^{2}\right)  \geq\frac{1}{4}\frac{1}{h^{2}}%
\]
with the latter inequality following from the previous lemma. \ 

Thus
\[
\left(  \frac{1}{a_{2}}\right)  ^{2}+\left(  \frac{1}{a_{3}}\right)
^{2}+...+\left(  \frac{1}{a_{n}}\right)  ^{2}\geq\frac{1}{c_{n}^{2}4A^{2}}.
\]
It follows that for some $i,$
\[
\frac{1}{a_{i}^{2}}\geq\frac{1}{4\left(  n-1\right)  c_{n}^{2}A^{2}}.
\]
That is
\[
a_{i}\leq2\sqrt{n-1}c_{n}A.
\]
Now to finish the argument, let%
\[
R=2\sqrt{n-1}c_{n}A.
\]
On a ball of radius $R$, there is some bound on the function (not a priori but
depending on $u$)\ say $\bar{U}.$ \ \ That is
\[
u(x)\leq\bar{U}\ \text{on }B_{R.}%
\]
Now by convexity for any large enough $h$ the level set $u=h$ is non-empty and
convex. \ Choose $h>\bar{U}$. \ \ According to the above argument, this level
set must intersect some axis at a point less than $R$ from the origin, which
is a contradiction. \ \

\end{proof}

Now using this Proposition, we may repeat the argument of He \cite[section
3]{HePJM}: \ Letting $z=u_{1}(t,x)$ the map
\begin{align*}
\Phi &  :%
%TCIMACRO{\U{211d} }%
%BeginExpansion
\mathbb{R}
%EndExpansion
\times%
%TCIMACRO{\U{211d} }%
%BeginExpansion
\mathbb{R}
%EndExpansion
^{n-1}\rightarrow%
%TCIMACRO{\U{211d} }%
%BeginExpansion
\mathbb{R}
%EndExpansion
\times%
%TCIMACRO{\U{211d} }%
%BeginExpansion
\mathbb{R}
%EndExpansion
^{n-1}\\
\Phi(t,x) &  =(z,x)
\end{align*}
is a diffeomorphism. \ Thus for $x$ fixed, there exists a unique $t=t(z,x)$
such that $z=u_{1}(t,x).$ \ Defining
\[
\theta(z,x)=t(z,x)
\]
the computations in \cite[section 3]{HePJM} yield that $\theta$ is a harmonic
function. \ It follows that $\frac{\partial\theta}{\partial z}=1/u_{11}$ is a
positive harmonic function, so must be constant. \ \ Now we have
\[
u(t,x)=at^{2}+tb(x)+g(x)
\]
which satisfies \cite[section 2]{HePJM}
\begin{align*}
\Delta b &  =0\\
\Delta g &  =\frac{1}{2a}\left(  1+\left\vert \nabla b\right\vert ^{2}\right)
.
\end{align*}
Letting $t=0$ we conclude that $g$ is convex. \ Letting $t\rightarrow\pm
\infty$ we conclude that $b$ is convex and concave, so must be linear. \ It
follows that $\left\vert \nabla b\right\vert $ is constant, and
\[
\Delta g-c\left\vert x\right\vert ^{2}%
\]
is a semi-convex harmonic function, which must be a quadratic. \ 

\bigskip

\section{Counterexamples}

We use the method described in \cite{warrensigmak} and restrict to $n=3.$
\ Consider
\[
u(t,x)=r^{2}e^{t}+h(t)
\]
where $r=\left(  x_{2}^{2}+x_{3}^{2}\right)  ^{1/2}.$ \ At any point we may
rotate $%
%TCIMACRO{\U{211d} }%
%BeginExpansion
\mathbb{R}
%EndExpansion
^{2}$ so that $x_{2}=r$ and get
\[
D^{2}u=\left(
\begin{array}
[c]{ccc}%
r^{2}e^{t}+h^{\prime\prime}(t) & 2re^{t} & 0\\
2re^{t} & 2e^{t} & 0\\
0 & 0 & 2e^{t}%
\end{array}
\right)  .
\]
We compute
\[
\tilde{\sigma}_{2}\left(  D^{2}u\right)  =4e^{t}\left(  r^{2}e^{t}%
+h^{\prime\prime}(t)\right)  -4r^{2}e^{2t}=4e^{t}h^{\prime\prime}(t).
\]
Then
\[
u=r^{2}e^{t}+\frac{1}{4}e^{-t}%
\]
is a solution. \ 

Now defining complex variables
\begin{align*}
z_{1} &  =t+is\\
z_{2} &  =x+iy
\end{align*}
we can consider the function
\begin{equation}
u=\left(  x^{2}+y^{2}\right)  e^{t}+\frac{1}{4}e^{-t}.\label{cx}%
\end{equation}
The function \ref{cx} satisfies the equation complex Monge-Amp\`{e}re
equation
\[
\left(  \partial_{z_{1}}\partial_{z_{\bar{1}}}u\right)  \left(  \partial
_{z_{2}}\partial_{z_{\bar{2}}}u\right)  -\left(  \partial_{z_{1}}%
\partial_{z_{\bar{2}}}u\right)  \left(  \partial_{z_{2}}\partial_{z_{\bar{1}}%
}u\right)  =1.
\]
One can check that the induced Ricci-flat complex metric
\[
g_{i\bar{j}}=\partial_{z_{i}}\partial_{z_{\bar{j}}}u
\]
on $%
%TCIMACRO{\U{2102} }%
%BeginExpansion
\mathbb{C}
%EndExpansion
^{2}$ is neither complete complete nor flat. \ 

\bibliographystyle{plain}
\bibliography{donaldson}

\end{document}